\newtheorem{theorem}{Theorem}[section]
\newtheorem{definition}[theorem]{Definition}
\newtheorem{proposition}[theorem]{Proposition}
\newtheorem{lemma}[theorem]{Lemma}
\newtheorem{corollary}[theorem]{Corollary}
\newtheorem{remark}[theorem]{Remark}
\newtheorem{example}[theorem]{Example}
\newcommand{\C}{\mathbb C}
\newcommand{\Z}{\mathbb Z}
\newcommand{\SL}{\mathit{SL}}
\begin{document}

\title[A polynomial defined by Reidemeister torsion]
{A polynomial defined 
by the $\SL(2;\C)$-Reidemeister torsion 
for a homology 3-sphere obtained by Dehn-surgery along a torus knot}
\author{Teruaki Kitano}

\address{Department of Information Systems Science, 
Faculty of Science and Engineering, 
Soka University, 
Tangi-cho 1-236, 
Hachioji, Tokyo 192-8577, Japan}

\email{kitano@soka.ac.jp}

\thanks{2010 {\it Mathematics Subject Classification}. 57M27.}

\thanks{{\it Key words and phrases.\/}
Reidemeister torsion, 
a torus knot, Brieskorn homology 3-sphere, 
$\SL(2;\C)$-representation. }

\begin{abstract}
Let $M_n$ be a homology 3-sphere 
obtained by $\frac1n$-Dehn surgery along a $(p,q)$-torus knot. 
We consider a polynomial $\sigma_{(p,q,n)}(t)$ 
whose zeros are the inverses of the Reideimeister torsion of $M_n$ for $\SL(2;\C)$-irreducible representations. 
We give an explicit formula of this polynomial by using Tchebychev polynomials of the first kind. Further we also give a 3-term relations of these polynomials.
\end{abstract}

\maketitle
\section{Introduction}

Let $T(p,q)$ be a $(p, q)$-torus knot in $S^3$. 
Here $p,q$ are coprime and positive integers.  
%
%
Let $M_n$ be a homology 3-sphere obtained 
by $\frac1n$-Dehn surgery along $T(p,q)$. 
It is well known that 
$M_n$ is a Brieskorn homology 3-sphere $\Sigma(p,q,N)$ 
where we write $N$ for $|pqn+1|$. 
Here $\Sigma(p,q,N)$ is defined 
as 
\[
\{(z_1,z_2,z_3)\in \C^3\ |\ z_1^p+z_2^q+z_3^N=0,\ |z_1|^2+|z_2|^2+|z_3|^2=1\}. 
\]

In this paper we consider the Reidemeister torsion $\tau_\rho(M_n)$ of $M_n$ 
for an irreducible representation $\rho:\pi_1(M_n)\rightarrow\SL(2;\C)$. 
 
In the 1980's Johnson \cite{Johnson} 
gave an explicit formula for any non-trivial value of $\tau_{\rho}(M_n)$. 
Furthermore, 
he proposed to consider 
the polynomial whose zero set coincides 
with the set of all non-trivial values $\{\frac{1}{\tau_\rho(M_n)}\}$, 
which is denoted by ${\sigma}_{(2,3,n)}(t)$.
Under some normalization of ${\sigma}_{(2,3,n)}(t)$,  
he gave a 3-term relation 
among ${\sigma}_{(2,3,n+1)}(t),{\sigma}_{(2,3,n)}(t)$ and ${\sigma}_{(2,3,n-1)}(t)$ 
by using Tchebychev polynomials of the first kind. 

Recently in \cite{Kitano15-2} 
we gave one generalization of the Johnson's formula 
for a $(2p', q)$-torus knot. 
Here $p', q$ are coprime odd integers. 
In this paper, we show the formula for any torus knot $T(p,q)$. 

\flushleft{Acknowledgements.}
The author was staying in Aix-Marseille University when he wrote this article. 
He thanks for their hospitality. 
This research was partially supported by JSPS KAKENHI 25400101. 
\section{Definition of Reidemeister torsion}

First let us describe definitions and properties 
of the Reidemeister torsion for $\SL(2;\C)$-representations. 
See Johnson \cite{Johnson}, Kitano \cite{Kitano94-1,Kitano94-2} 
and Porti \cite{Porti15-1} for details.

Let $\textbf{b}=(b_1,\cdots,b_d)$ and
$\textbf{c}= (c_1,\cdots,c_d)$ be two bases for a $d$-dimensional vector space $W$ 
over $\C$. 
Setting $\displaystyle b_i=\sum_{j=1}^d p_{ji}c_{j}$, 
we obtain a nonsingular matrix $P=(p_{ij})\in\mathit{GL}(d;\C)$. 
Let $[\textbf{b}/\textbf{ c}]$ denote the determinant of $P$.

Suppose
\[
C_*: 0\rightarrow C_k
\overset{\partial_k}{\rightarrow} 
C_{k-1}
\overset{\partial_{k-1}}{\rightarrow} 
\cdots 
\overset{\partial_{2}}{\rightarrow}
C_1\overset{\partial_1}{\rightarrow} C_0\rightarrow 0
\]
is an acyclic chain complex of finite dimensional vector spaces over $\C$. 
We assume that a preferred basis $\textbf{c}_i$ for $C_i$ is given for each $i$. 
That is, $C_\ast$ is a based acyclic chain complex over $\C$. 

Choose any basis $\textbf{b}_i$ for $B_i=\mathrm{Im}(\partial_{i+1})$ 
and take a lift of it in $C_{i+1}$, 
which is denoted by $\tilde{\textbf{b}}_i$.
Since $B_i=Z_i=\mathrm{Ker}{\partial_{i}}$, 
the basis $\textbf{b}_i$ can serve as a basis for $Z_i$. 
Furthermore since the sequence
\[
0\rightarrow Z_i
\rightarrow C_i
\overset{\partial_{i}}{\rightarrow} B_{i-1}\rightarrow 0
\]
is exact, the vectors $(\textbf{b}_i,\tilde{\textbf{b}}_{i-1}) $ form a basis for $C_i$. 
Here $\tilde{\textbf{b}}_{i-1}$ is a lift of $\textbf{b}_{i-1}$ in $C_i$. 
It is easily shown that
$[\textbf{b}_i,\tilde{\textbf{b}}_{i-1}/\textbf{c}_i]$ does not depend 
on a choice 
of a lift $\tilde{\textbf{ b}}_{i-1}$. 
Hence we can simply denote
it by $[\textbf{b}_i, \textbf{b}_{i-1}/\textbf{c}_i]$.

\begin{definition}
The torsion $\tau(C_*)$ 
of a based chain complex $C_*$ with $\{\textbf{c}_\ast\}$ is given by the alternating product 
\[
\tau(C_*)=\prod_{i=0}^k[\textbf{b}_i, \textbf{b}_{i-1} /\textbf{c}_i]^{(-1)^{i+1}}.
\]
\end{definition}

\begin{remark}
It is easy to see that $\tau(C_\ast)$ does not depend on choices of the bases 
$\{\textbf{b}_0,\cdots,\textbf{b}_k\}$. 
\end{remark}

Now we apply this torsion invariant of chain complexes to geometric situations 
as follows. 
Let $X$ be a finite CW-complex and $\tilde X$ a universal covering of $X$ 
with the lifted CW-complex structure. 
The fundamental group $\pi_1 X$ acts on $\tilde X$ from the right-hand side as deck transformations. 
We may assume that this action is free and cellular 
by taking a subdivision if we need. 
Then the chain complex $C_*(\tilde{X};\Z)$ has the structure of a chain complex of free $\Z[\pi_1 X]$-modules. 

Let $\rho:\pi_1 X\rightarrow \SL(2; \C)$  be a representation. 
We denote the 2-dimensional vector space $\C^2$ by $V$. 
Using the representation $\rho$, $V$ admits the structure of a $\Z[\pi_1 X]$-module 
and then we denote it by $V_\rho$.

Define the chain complex $C_*(X; V_\rho)$ 
by $C_*({\tilde X}; \Z)\otimes_{\Z[\pi_1 X]} V_\rho$ 
and choose a preferred
basis
\[
(\tilde{u}_1\otimes \textbf{e}_1, \tilde{u}_1\otimes \textbf{e}_2, \cdots,\tilde{u}_d\otimes\textbf{e}_1, \tilde{u}_d\otimes\textbf{e}_2)
\]
of $C_i(X; V_\rho)$ 
where 
$\{\textbf{e}_1 , 
\textbf{e}_2\}$ is a canonical basis of $V=\C^2$, 
$\{u_1,\cdots,u_d \}$ are the $i$-cells 
giving a basis of $C_i(X; \Z)$ and 
$\{\tilde{u}_1,\cdots,\tilde{u}_d\}$ are lifts of them on $\tilde{X}$. 

Now we suppose that $C_*(X; V_\rho)$ is acyclic, 
namely all homology groups $H_*(X; V_\rho)$ are vanishing. 
In this case $\rho$ is called an acyclic representation. 

\begin{definition}
Let $\rho:\pi_1(X)\rightarrow \SL(2; \C)$ be an acyclic representation. 
Then the Reidemeister torsion $\tau_\rho(X)\in\C\setminus\{0\}$ is defined by the torsion $\tau(C_*(X; V_\rho))$ of $C_*(X; V_\rho)$. 
\end{definition}

\begin{remark}
\noindent
\begin{enumerate}
\item
We define $\tau_\rho(X)=0$ for a non-acyclic representation $\rho$.
\item
The definition of $\tau_\rho(X)$ depends on several choices. 
However it is well known that it is a piecewise linear invariant 
in the case of $\SL(2;\C)$-representations. 
\end{enumerate}
\end{remark}

\section{Johnson's theory}

Let $T(p,q)\subset S^3$ be a $(p, q)$-torus knot with coprime integers $p,q$. 
Now we write $M_n$ to a closed orientable 3-manifold 
obtained by a $\frac1n$-Dehn surgery along $T(p,q)$. 
%
%
Here the fundamental group of $S^3\setminus T(p,q)$ has the presentation as follows; 
\[
\pi_1 (S^3\setminus T(p,q))=\langle x,y\ |\ x^{p} =y^ q\rangle.
\]
Furthermore $\pi_1(M_n)$ admits the presentation as follows;
\[
\pi_1(M_n)=\langle x,y\ |\ x^{p} =y^q, m l^n= 1\rangle
\]
where $m=x^{-r}y^s\ (r,s\in\Z,\ p s- q r=1)$ is a meridian of $T(p,q)$ 
and similarly $l=x^{-p}m^{p q}=y^{- q}m^{p q}$ is a longitude. 

%
It is seen \cite{Johnson, Kitano15-2} 
that the set of the conjugacy classes of the irreducible
representations of $\pi_1(M_n)$ in $\SL(2 ;\C)$ is finite. 
Any conjugacy class can be represented by $\rho_{(a,b,k)}:\pi_1(M_n)\rightarrow\SL(2;\C)$ 
for some triple $(a,b,k)$ 
such that
\begin{enumerate}
\item
$0<a<p,0<b< q, a\equiv b \ \text{mod } 2$,
\item
$0<k<N=|p q n+1|, k\equiv na\  \text{mod } 2$,
\item
$\mathrm{tr}(\rho_{(a,b,k)}(x))=2\cos \frac{a\pi }{p}$,
\item
$\mathrm{tr} (\rho_{(a,b,k)}(y))=2 \cos\frac{b\pi}{ q}$,
\item
$\mathrm{tr} (\rho_{(a ,b,k)}(m)) =2 \cos\frac{k\pi}{N}$.
\end{enumerate}

Furthermore Johnson computed $\tau_{\rho_{(a,b,k)}}(M_{n})$ as follows.

\begin{theorem}[Johnson]
\noindent
\begin{enumerate}
\item
A representation $\rho_{(a,b,k)}$ is acylic if and only if 
$a\equiv b\equiv 1$.
\item
For any acyclic representation ${\rho_{(a,b,k)}}$ with $a\equiv b{\equiv} 1$, 
then one has
\[
\tau_{\rho_{(a,b,k)}}(M_{n})
=\frac{1}{2\left(1-\cos\frac{a\pi}{p}\right) 
\left(1-\cos\frac{b\pi}{ q}\right)
\left(1+\cos\frac{p q k\pi }N\right)}.
\]
\end{enumerate}
\end{theorem}

%
%

\section{Main theorem}

In this section 
we give a formula of the torsion polynomial 
${\sigma}_{(p, q,n)}(t)$ 
for $M_n=\Sigma(p, q,N)$ 
obtained by a $\frac1n$-Dehn surgery along $T(p,q)$. 
Now we define torsion polynomials as follows. 

\begin{definition}
A one variable polynomial ${\sigma}_{(p, q,n)}(t)$ is called the torsion polynomial 
of $M_n$ if the zero set coincides with the set 
of all non trivial values 
$\left\{\frac{1}{\tau_\rho(M_n)}\ |\ \tau_\rho(M_n)\neq 0\right\}$ 
and it satisfies the following normalization condition 
as \[
\sigma_{(p,q,n)}(0)=
\begin{cases}
&(-1)^{\frac{(N-1)p(q-1)}{8}}\ p\text{ is even}, q\text{ is odd},\\
&(-1)^{\frac{(N-1)(p-1)q}{8}}\ q\text{ is even}, q\text{ is odd},\\
&(-1)^{\frac{(N-1)(p-1)(q-1)}{8}}\ p,q\text{ are odd}, n\ \text{ is even},\\
&(-1)^{\frac{N(p-1)(q-1)}{8}}\ p,q\text{ are odd}, n\text{ is odd} 
\end{cases}
\]
where $N=|pqn+1|$. 
\end{definition}

\begin{remark}
\noindent
\begin{enumerate}
\item
For $M_{0}=S^{3}$, 
the torsion polynomial $\sigma_{(p,q,0)}(t)$ is defined 
by $\sigma_{(p,q,0)}(t)=1$. 
\item
In the case that $p=2p'$ is even and $p'$ is odd, 
then this normalization condition coincides with the one in \cite{Kitano15-2}.
\end{enumerate}
\end{remark}

From here assume $n\neq 0$. 
Recall Johnson's formula 
\[
\frac{1}{\tau_{\rho_{(a,b,k)}}(M_{n})}
={2\left(1-\cos\frac{a\pi}{p}\right) 
\left(1-\cos\frac{b\pi}{ q}\right)
\left(1+\cos\frac{p q k\pi}N\right)} 
\]
where $0<a<p,0<b< q, a\equiv b\equiv 1\text{ mod 2}, k\equiv n\text{ mod }2$. 
Here by putting  
\[
C_{(p, q,a,b)}=\left(1-\cos\frac{a\pi }{p}\right)\left(1-\cos\frac{b\pi }{ q}\right),
\]
one has 
\[
\frac{1}{\tau_{\rho_{(a,b,k)}}(M_n)}=4C_{(p,q,a,b)}\cdot \frac12\left(1+\cos\frac{p q k\pi}N\right).
\]

Main result is the following. 

\begin{theorem}
The torsion polynomial of $M_{n}$ is given by 
\[
{\sigma}_{(p, q,n)}(t)=
\prod_{(a,b)}Y_{(n,a,b)}(t)
\]
where
\[
Y_{(n, a,b)}(t)
=
\begin{cases}
&
\frac{
T_{{N+1}}(s)
-T_{{N-1}}(s)}{2(s^2-1)^2}\hskip 0.3cm (p\text{ or }q\text{ is even}, n>0),\\
&
-\frac{
T_{{N+1}}(s)
-T_{{N-1}}(s)}{2(s^2-1)^2}\ (p\text{ or }q\text{ is even}, n<0),\\
&
\frac{
T_{{N+1}}(s)
-T_{{N-1}}(s)}{2(s^2-1)^2}\hskip 0.3cm (p,q\text{ are odd}, n\text{ is even},n>0),\\
&
-\frac{
T_{{N+1}}(s)
-T_{{N-1}}(s)}{2(s^2-1)^2}\ (p,q\text{ are odd}, n\text{ is even},n<0).\\
&
T_N(s)\hskip 0.3cm (p,q,n\text{ are odd}).\\
\end{cases}
\]
Here 
\begin{itemize}
\item
$T_l(x)$ is the $l$-th Tchebychev polynomial of the first kind.
\item
$s=\displaystyle\frac{\sqrt{t}}{2\sqrt{C_{(p, q,a,b)}}}$.
\item
$C_{(p, q,a,b)}
=\left(1-\cos\frac{a\pi }{p}\right)\left(1-\cos\frac{b\pi }{ q}\right)$. 
\item
a pair of integers $(a,b)$ is satisfying the following conditions;
\begin{itemize}
\item
$0<a<p,0<b< q$, 
\item
$a\equiv b\equiv 1\text{ mod }2$. 
\end{itemize}
\end{itemize}
\end{theorem}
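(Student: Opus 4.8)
The plan is to start from Johnson's explicit formula and reorganize the product over all admissible triples $(a,b,k)$ into a product over pairs $(a,b)$ of one-variable polynomials in $t$. For a fixed admissible pair $(a,b)$ (so $0<a<p$, $0<b<q$, $a\equiv b\equiv 1$), the non-trivial torsion values are
\[
\frac{1}{\tau_{\rho_{(a,b,k)}}(M_n)}=4C_{(p,q,a,b)}\cdot\tfrac12\left(1+\cos\tfrac{pqk\pi}{N}\right),
\]
as $k$ ranges over the integers with $0<k<N$ and $k\equiv na\pmod 2$. So first I would determine exactly which residues $k$ occur and how many, splitting into the cases in the theorem statement according to the parities of $p$, $q$ and $n$ (note $a$ is odd, so $na\equiv n\pmod 2$). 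The factor $Y_{(n,a,b)}(t)$ should be, up to the normalizing sign, $\prod_k\bigl(t-4C_{(p,q,a,b)}\cdot\tfrac12(1+\cos\tfrac{pqk\pi}{N})\bigr)$ over the relevant $k$.

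Next I would make the substitution $s=\sqrt{t}/(2\sqrt{C_{(p,q,a,b)}})$, so that $t=4C_{(p,q,a,b)}s^2$ and the typical factor becomes $4C_{(p,q,a,b)}\bigl(s^2-\tfrac12(1+\cos\tfrac{pqk\pi}{N})\bigr)=4C_{(p,q,a,b)}\bigl(s^2-\cos^2\tfrac{pqk\pi}{2N}\bigr)$. The key identity I would invoke is the classical factorization of Tchebychev polynomials: $T_N(s)-1$, $T_N(s)+1$, and $T_{N+1}(s)-T_{N-1}(s)=2(s^2-1)U_{N-1}(s)$ all have roots among $\cos\tfrac{j\pi}{N}$ or $\cos\tfrac{(2j+1)\pi}{2N}$, and more precisely $\prod_j\bigl(s-\cos\tfrac{j\pi}{m}\bigr)$-type products evaluate to normalized Tchebychev expressions. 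Since $\gcd(pq,N)=1$ (because $N=|pqn+1|$), multiplication by $pq$ permutes the residues mod $2N$, so $\{\cos\tfrac{pqk\pi}{2N}\}$ ranges over the same set as $\{\cos\tfrac{k\pi}{2N}\}$; this reindexing is what lets the product collapse to $T_N(s)$ in the all-odd case and to $(T_{N+1}(s)-T_{N-1}(s))/2(s^2-1)^2$ in the others. I would then match the leading coefficient and the $s\to 0$ value to pin down the overall constant, and verify it agrees with the prescribed $\sigma_{(p,q,n)}(0)$ using $C_{(p,q,a,b)}$ and the product over all pairs $(a,b)$.

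The computation of $\sigma_{(p,q,n)}(0)$ deserves care: setting $t=0$ corresponds to $s=0$, and one needs $T_N(0)=\cos(N\pi/2)$, $T_{N\pm1}(0)=\cos((N\pm1)\pi/2)$, together with counting the pairs $(a,b)$ — the number of odd $a$ in $(0,p)$ times the number of odd $b$ in $(0,q)$ — to reassemble the sign exponents $(N-1)p(q-1)/8$, etc. I would also handle the excluded roots carefully: when $s=\pm1$ (i.e. $\cos\tfrac{pqk\pi}{2N}=\pm1$, forcing $pqk\equiv 0\pmod{2N}$, impossible for $0<k<N$ given the coprimality) there is no cancellation issue, so the division by $(s^2-1)^2$ is genuinely a polynomial identity after reindexing, not merely a rational one.

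The main obstacle I expect is the bookkeeping of which $k$'s are admissible and the parity normalization — getting the index set right in each of the four-plus cases, confirming that the $\gcd(pq,N)=1$ reindexing is a bijection on the relevant residue classes mod $2N$ (respecting the parity constraint $k\equiv n\pmod 2$), and then reconciling the resulting sign with the four-branch definition of $\sigma_{(p,q,n)}(0)$. The Tchebychev factorization itself is standard; the delicate point is that the parity condition on $k$ halves the index set in a way that interacts with the parities of $p$ and $q$, which is precisely why the five cases in the statement look different.
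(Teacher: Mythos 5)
Your overall architecture is the same as the paper's: fix an admissible pair $(a,b)$, take the non-trivial values $4C_{(p,q,a,b)}\cdot\frac12\bigl(1+\cos\frac{pqk\pi}{N}\bigr)$ from Johnson's formula, use coprimality to reindex the cosine set, recognize the reindexed values as the roots of a Tchebychev-type polynomial in $s=\sqrt{t}/(2\sqrt{C_{(p,q,a,b)}})$, match degrees, and pin the constant by the value at $t=0$. That is exactly what the paper does with the candidates $X_n(x)=\frac{T_{N+1}(x)-T_{N-1}(x)}{2(x^2-1)}$ and $X'_n(x)=T_N(x)$.

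There is, however, one step in your plan that fails as stated, and it is the step carrying most of the weight: the claim that ``since $\gcd(pq,N)=1$, multiplication by $pq$ permutes the residues mod $2N$.'' When $p$ or $q$ is even (two of the five branches), $N=|pqn+1|$ is odd and $\gcd(pq,2N)=2$, so multiplication by $pq$ is \emph{not} a bijection mod $2N$; e.g.\ for $p=4,q=3,n=1$ it sends the admissible odd $k$'s onto even residues mod $26$. The desired set equality is still true, but it cannot be obtained by the bijection you invoke; it needs the folding argument mod $N$ (using that $\cos$ identifies $m$ with $\pm m$ mod $2N$ and that, for odd $N$, $k$ and $N-k$ have opposite parities, so the parity constraint picks exactly one representative from each folded pair). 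The paper's route is to strip off only the factor $q$ (its Lemma, using $\gcd(q,N)=1$) and then absorb the remaining even factor by a direct evaluation: with $z_k=\cos\frac{p'k\pi}{N}$ and $p'$ even one gets $T_{N\pm1}(z_k)=z_k$, hence $T_{N+1}-T_{N-1}$ vanishes there; in the all-odd case ($\gcd(pq,2N)=1$, where your bijection claim is actually valid) one gets $T_N(z'_k)=\cos\frac{pk\pi}{2}=0$ since $pk$ is odd. You flagged this bookkeeping as the delicate point, but as written your justification would have to be replaced, not merely checked. A smaller remark: your own identity $T_{N+1}(s)-T_{N-1}(s)=2(s^2-1)U_{N-1}(s)$ shows the correct normalizing denominator is $2(s^2-1)$ to the first power (degree $\frac{N-1}{2}$ in $t$), as in the paper's proof; the exponent $2$ in the displayed statement is a typo, so your leading-coefficient and degree matching should be carried out against $U_{N-1}$ rather than against the printed expression.
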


\begin{remark}
Recall that the $l$-th Tchebychev polynomial $T_l(x)$ is defined by 
$T_l(\cos\theta)=\cos(l\theta)$. 
\end{remark}

\begin{proof}
We consider the following;
\[
\begin{split}
X_n(x)
&=
\begin{cases}
&
\ \ \frac{T_{N+1}(x)-T_{N-1}(x)}{2(x^2-1)}\ (n>0)\\
&
-\frac{T_{N+1}(x)-T_{N-1}(x)}{2(x^2-1)}\ (n<0).\\
\end{cases}\\
X'_n(x)
&=T_{N}(x).
\end{split}
\]


First we assume $p=2p'$ is even. 
For the case that $p'$ is odd, then it is proved in \cite{Kitano15-2}. 
Then we suppose that $p'$ is even. Here $N=|2p'qn+1|$ is always odd. 

\noindent
\flushleft{Case 1:$p=2p'$, $p'$ is even and $n> 0$}

We modify one factor $(1+\cos\frac{2p' q k\pi}N)$ 
of $\displaystyle\frac{1}{\tau_\rho(M_n)}$ as follows. 
See \cite{Kitano15-2} for the proof. 

\begin{lemma}
The set $\{\cos\frac{2p' q  k\pi}{N}\ |\ 0<k<N, k\equiv n\text{ mod }2\}$ is equal to  
the set $\{\cos\frac{2p' k\pi }{N}\ |\ 0<k<\frac{N-1}{2}\}$. 
\end{lemma}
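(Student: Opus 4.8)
The plan is to reduce the identity to elementary arithmetic modulo $N$, the decisive point being that $N=2p'qn+1$ is odd and coprime to both $p'$ and $q$ (any common divisor of $N$ with $p'$, resp.\ $q$, divides $N-2qn\cdot p'=1$, resp.\ $N-2p'n\cdot q=1$). First I would record the elementary fact that $\cos\frac{2\pi j}{N}$ depends on $j\in\Z$ only through $j\bmod N$ and is invariant under $j\mapsto -j$. Writing $\frac{2p'qk\pi}{N}=\frac{2\pi(p'qk)}{N}$ and $\frac{2p'k\pi}{N}=\frac{2\pi(p'k)}{N}$, this yields $\cos\frac{2p'qk\pi}{N}=\cos\frac{2p'j\pi}{N}$ as soon as $qk\equiv\pm j\pmod{N}$, after cancelling the unit $p'$. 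So the lemma becomes a statement purely about residues modulo $N$.

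Next I would establish the inclusion ``$\subseteq$''. For a left-hand term with $0<k<N$, the residue $qk\bmod N$ is some $\ell\in\{1,\dots,N-1\}$, nonzero because $\gcd(q,N)=1$; folding, $j:=\min(\ell,N-\ell)$ lies in $\{1,\dots,\frac{N-1}{2}\}$ and satisfies $qk\equiv\pm j\pmod{N}$, hence $\cos\frac{2p'qk\pi}{N}=\cos\frac{2p'j\pi}{N}$ is a right-hand term. For the reverse inclusion I would simply count. The index set $\{k:0<k<N,\ k\equiv n\bmod 2\}$ has $\frac{N-1}{2}$ elements because $N$ is odd, and the corresponding cosines are pairwise distinct: $\cos\frac{2p'qk\pi}{N}=\cos\frac{2p'qk'\pi}{N}$ forces $k\equiv\pm k'\pmod{N}$, where the sign $+$ gives $k=k'$, and the sign $-$ gives $k+k'\equiv 0\pmod{N}$, i.e.\ $k'=N-k$, which is impossible since $N$ odd makes $N-k$ have the parity opposite to $k$. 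Similarly, the $\frac{N-1}{2}$ values $k=1,\dots,\frac{N-1}{2}$ on the right give distinct cosines, this time because $k+k'\equiv 0\pmod{N}$ cannot hold for $1\le k,k'\le\frac{N-1}{2}$. Since a set of $\frac{N-1}{2}$ elements contained in another set of $\frac{N-1}{2}$ elements must coincide with it, the two sets are equal.

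I do not anticipate any serious obstacle; the one place to be careful is the parity bookkeeping, where the oddness of $N$ is used twice — once to make $\{k:0<k<N,\ k\equiv n\bmod 2\}$ have size exactly $\frac{N-1}{2}$, and once to guarantee that the folding $k\mapsto N-k$ reverses parity, so that distinct admissible $k$ never produce the same cosine. An alternative to the counting step is to upgrade the inclusion of the second paragraph into an explicit bijection between the two index sets and verify surjectivity by the same parity argument; this is the route taken in \cite{Kitano15-2}.
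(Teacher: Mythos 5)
Your argument is correct in substance, and note that the paper itself gives no proof of this lemma --- it simply defers to \cite{Kitano15-2} --- so there is no in-paper argument to compare against; your reduction to arithmetic modulo $N$ (cancelling the units $p'$ and $q$ modulo $N=2p'qn+1$, folding $qk \bmod N$ into the interval $\{1,\dots,\frac{N-1}{2}\}$, and then matching cardinalities via the oddness of $N$) is the natural argument and supplies exactly the self-contained verification the paper omits.

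One point you should make explicit rather than pass over silently: as printed, the right-hand set carries the strict bound $0<k<\frac{N-1}{2}$, and with that literal reading the lemma is false. Your own distinctness arguments show why: the left-hand set has exactly $\frac{N-1}{2}$ elements, while the strict range would allow at most $\frac{N-3}{2}$ on the right (for instance, with $p'=2$, $q=3$, $n=1$, so $N=13$, the value $\cos\frac{2\pi}{13}$ occurs on the left, at $k=11$, but is only reached on the right at $k=6=\frac{N-1}{2}$). Your proof in fact uses the inclusive range $1\le k\le\frac{N-1}{2}$ throughout (both in the folding step and in the count of $\frac{N-1}{2}$ right-hand values), and that is clearly the intended statement: immediately after the lemma the paper itself works with $z_k$ for $1\le k\le\frac{N-1}{2}$. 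So record that you are proving the lemma with the bound $0<k\le\frac{N-1}{2}$; with that reading, every step of your argument --- the coprimality of $N$ with $p'$ and $q$, the fact that $\cos\frac{2\pi j}{N}$ depends only on $\pm j \bmod N$, the parity argument using that $N$ is odd (so $k\mapsto N-k$ flips parity), and the final counting --- is sound.
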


Now we can modify  
\[
\begin{split}
\frac12\left(1+\cos\frac{2p' k\pi}{N}\right)
&=\frac12\cdot2\cos^2\frac{2p' k\pi}{2N}\\
&=\cos^2\frac{p' k\pi}{N}.
\end{split}
\]

We put 
\[
z_k=\cos\frac{p' k\pi}{N}\ (1\leq k\leq N-1). 
\]
By the definition, it is seen  
\[
\begin{split}
z_{N-k}
&=\cos\frac{p'(N-k)\pi}{N}\\
&=\cos(p'\pi-\frac{ p'k\pi}{N})\\
&=z_k
\end{split}
\]
because $p'$ is even.

Therefore 
it is enough to consider only $z_k\ (1\leq k\leq \frac{N-1}{2})$. 

Now we substitute $x=z_k$ to $T_{N+1}(x)$. 
Then one has  
\[
\begin{split}
T_{N+1}(z_k)
&=\cos\left((N+1)\frac{p'k\pi}{N}\right)\\
&=\cos\frac{p'k\pi}{N}\\
&=z_k
\end{split}\]
and 
\[
\begin{split}
T_{N-1}(z_k)
&=\cos\left((N-1)\frac{p'k\pi}{N}\right)\\
&=\cos\frac{p'k\pi}{N}\\
&=z_k.
\end{split}
\]

Hence it holds 
\[
T_{N+1}(z_k)-T_{N-1}(z_k)=0.
\]
By properties of Tchebychev polynomials, 
it is seen that  
\begin{itemize}
\item
$T_{N+1}(1)-T_{N-1}(1)=0$,
\item
$T_{N+1}(-1)-T_{N-1}(-1)=0$.
\end{itemize}

We remark that 
the degree of $X_n(x)=\frac{T_{N+1}(x)-T_{N-1}(x)}{2(x^2-1)}$ is $N-1$ 
and $z_{1},\cdots,z_{\frac{N-1}{2}}$ are zeros. 
Because both of $T_{N+1}(x)$ and $T_{N-1}(x)$ are even functions, 
then $-z_{1},\cdots,-z_{\frac{N-1}{2}}$ are also zeros of $X_{n}(x)$. 
Hence $X_n(x)$ is a functions of $x^2$. 
Here by replacing 
$x$ by $\frac{\sqrt{t}}{2\sqrt{C_{(p, q,a,b)}}}$, 
the degree of $Y_{(n,a,b)}(t)$ is $\frac{N-1}{2}$, 
and the roots of $Y_{(n,a,b)}(t)$ 
are 
\[
4C_{(p, q,a,b)}z_k^2
=4C_{(p, q,a,b)}\cos^2\frac{\pi k}{N}
\ \ \ 
\left(0<k<\frac{N-1}{2}\right),
\]
which are all non trivial values of 
$\frac{1}{\tau_{\rho_{(a,b,k)}}(M_n)}$.

Here we check the normalization condition. 
By the definition of $Y_{(n,a,b)}(t)$ and properties of $T_{N+1}(x),T_{N-1}(x)$, 
one has
\[
\begin{split}
Y_{(n,a,b)}(0)
&=\frac{T_{N+1}(0)-T_{N-1}(0)}{2(0-1)}\\
&=-\frac{(-1)^{\frac{N+1}{2}}-(-1)^{\frac{N-1}{2}}}{2}\\
&=(-1)^{\frac{N-1}{2}}.
\end{split}
\]
Hence it can be seen 
\[
\begin{split}
\sigma_{(p,q,n)}(0)
&=\prod_{(a,b)}(-1)^\frac{N-1}{2}\\
&=\prod_{(a,b)}\left((-1)^\frac{N-1}{2}\right)^{\frac{p(q-1)}{4}}\\
&=(-1)^{\frac{(N-1)p(q-1)}{8}}. 
\end{split}
\]
Therefore we obtain the formula. 

\noindent
\flushleft{Case 2:$p=2p'$ and $n<0$}

In this case we modify $N=|2p'qn+1|=2p'q|n|-1$. 
By the same arguments, it is easy to see the claim of the theorem is proved. 

Next assume both of $p,q$ are odd integers. 

\flushleft{Case 3:$p,q$ are odd and $n$ is even}

If $n$ is even, then $N=|pqn+1|$ is odd. 
Then the similar arguments in \cite{Kitano15-2} work well. 
Then it can be proved. 

\flushleft{Case 4:$p,q$ are odd and $n$ is odd}

Suppose $n$ is positive. 
First note that $N=|pqn+1|$ is even. 
We can modify one factor $(1+\cos\frac{p q k\pi}N)$ 
of $\displaystyle\frac{1}{\tau_\rho(M_n)}$ as follows. 
It is clear because $(q,N)=1$. 

\begin{lemma}
The set $\{\cos\frac{p q  k\pi}{N}\ |\ 0<k<N, k\equiv n\text{ mod }2\}$ is equal to  
the set $\{\cos\frac{p k\pi }{N}\ |\ 0<k<{N}, k\equiv 1\text{ mod }2\}$. 
\end{lemma}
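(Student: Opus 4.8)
The plan is to exploit the hypotheses of Case~4 — $p,q,n$ odd and $n>0$, so that $N=pqn+1$ is even and $\gcd(q,N)=1$ — and reduce the statement to the observation that multiplication by $q$ permutes the odd residues modulo $2N$ while respecting the symmetries of $\cos$.

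First I would record the elementary facts: since $n>0$ we have $N=pqn+1$; since $n,p,q$ are odd, $N$ is even; and $\gcd(q,N)=\gcd(q,pqn+1)=1$, whence, $q$ being odd, also $\gcd(q,2N)=1$. Next I would note that $j\mapsto\cos\frac{pj\pi}{N}$ on $\Z$ is invariant under $j\mapsto j+2N$ (because $p\in\Z$) and under $j\mapsto -j$, hence factors through $(\Z/2N\Z)/\{\pm1\}$. Because $2N$ is even, the parity of a class mod $2N$ is well defined, both $j\mapsto j+2N$ and $j\mapsto -j$ preserve it, and the involution $j\mapsto -j$ fixes only the class of $N$, which is even; therefore $\{1,3,\dots,N-1\}$ is a complete set of representatives for the odd classes in $(\Z/2N\Z)/\{\pm1\}$. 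Since $N$ is even, $\{1,3,\dots,N-1\}$ is also precisely the set of odd $k$ with $0<k<N$, so the right-hand set in the Lemma is exactly $\{\cos\frac{pj\pi}{N}:j\in\{1,3,\dots,N-1\}\}$.

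The key step is then immediate: multiplication by $q$ is a bijection of $\Z/2N\Z$ (as $\gcd(q,2N)=1$), it preserves parity (as $q$ is odd and $2N$ even), and it commutes with $j\mapsto -j$; hence $k\mapsto qk$ induces a bijection $\psi$ of $\{1,3,\dots,N-1\}$, where $\psi(k)$ is obtained from $qk\bmod 2N$ — which is odd, hence different from $0$ and from $N$ — by folding via $r\mapsto\min(r,2N-r)$. By periodicity and evenness of $\cos$ we get $\cos\frac{pqk\pi}{N}=\cos\frac{p\psi(k)\pi}{N}$. As $k$ runs over the odd integers in $(0,N)$, so does $\psi(k)$; and the condition $k\equiv n\equiv1\pmod2$ means exactly that $k$ is such an integer, so the left-hand set $\{\cos\frac{pqk\pi}{N}\}$ equals $\{\cos\frac{p\psi(k)\pi}{N}\}=\{\cos\frac{pj\pi}{N}:j\in\{1,3,\dots,N-1\}\}$, which is the right-hand set.

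There is no serious obstacle here — the author's ``it is clear'' is accurate — the only point needing a moment's care is the passage between the moduli $2N$ and $N$: one must check that $qk\bmod 2N$ never equals $0$ or $N$ and that parity survives the folding $r\mapsto\min(r,2N-r)$, and both are forced simply by $qk$ being odd together with $2N$ being even.
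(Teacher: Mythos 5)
Your argument is correct, and it is essentially the paper's own justification: the paper dismisses the lemma with ``it is clear because $(q,N)=1$'', meaning exactly the permutation-by-$q$ argument you carry out, using that $q$ odd and $\gcd(q,N)=1$ give $\gcd(q,2N)=1$ and that $\cos\frac{pj\pi}{N}$ factors through $(\Z/2N\Z)/\{\pm1\}$ with parity preserved. Your write-up simply supplies the details (the folding $r\mapsto\min(r,2N-r)$ and the check that $qk\bmod 2N$ avoids $0$ and $N$) that the author leaves implicit.
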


Now we can modify  
\[
\begin{split}
\frac12\left(1+\cos\frac{p k\pi}{N}\right)
&=\frac12\cdot2\cos^2\frac{p k\pi}{2N}\\
&=\cos^2\frac{p k\pi}{2N}.
\end{split}
\]

We put 
\[
z'_k=\cos\frac{p k\pi}{2N}\ (1\leq k\leq N-1,\ k\equiv 1\ \text{mod}\ 2). 
\]

Here we subsitute 
$x=z'_k\ (1\leq k\leq \frac{N-1}{2},\ k\equiv 1\ \text{mod}\ 2)$ 
to $T_{N}(x)$. 
Then one has 
\[
\begin{split}
T_{N}(z'_k)
&=\cos\left(\frac{N(p k\pi)}{2N}\right)\\
&=\cos\left(\frac{p k\pi}{2}\right)\\
&=0 
\end{split}
\]
because $pk$ is odd. 

Similarly it can be also seen that 
\[
T_{N}(-z'_k)=0.
\]

We mention that 
the degree of $\displaystyle X'_n(x)=T_{N}(x)$ is $N$ 
and $\pm z'_{1},\cdots,\pm z'_{N-1}$ are the zeros. 
Because $X'_n(x)$ is a functions of $x^2$. 
Here by replacing 
$x$ by $\frac{\sqrt{t}}{2\sqrt{C_{(p, q,a,b)}}}$, 
Here it holds that its degree of $Y_{(n,a,b)}(t)$ is $\frac{N-1}{2}$, 
and the roots of $Y_{(n,a,b)}(t)$ 
are 
\[
4C_{(p, q,a,b)}{z'_k}^2
=4C_{(p, q,a,b)}\cos^2\frac{\pi k}{N}
\ \ \left(0<k<\frac{N-1}{2}\right),
\] 
which are all non trivial values of $\frac{1}{\tau_{\rho_{(a,b,k)}}(M_n)}$.

Finally we can check the normalization condition as follows. 
By the definition of $Y_{(n,a,b)}(t)$, one has 
\[
\begin{split}
Y_{(n,a,b)}(0)
&=T_{N}(0)\\
&=(-1)^{\frac{N}{2}}
\end{split}
\]
and 
\[
\begin{split}
\sigma_{(p,q,n)}(0)
&=\prod_{(a,b)}(-1)^\frac{N}{2}\\
&=\left(
(-1)^\frac{N}{2}
\right)^{\frac{(p-1)(q-1)}{4}}\\
&=(-1)^{\frac{N(p-1)(q-1)}{8}}. 
\end{split}
\]

Therefore we obtain the formula. 

In the case that $n$ is negative, then it can be proved by similar arguments. 
Therefore this completes the proof.
\end{proof}

\begin{remark}
By defining as $X_{0}(t)=1$, it implies $Y_{(0,a,b)}(t)=1$. 
Then the above statement is true for $n=0$.
\end{remark}

By direct computation, one obtains the following corollary. 

\begin{corollary}
The degree $\mathit{deg}(\sigma_{(p,q,n)}(t))$ is given by 
\[
\mathit{deg}(\sigma_{(p,q,n)}(t))
=
\begin{cases}
&\frac{(N-1)p(q-1)}{8}\ \ (p\text{ even},q\text{ odd}),\\
&\frac{(N-1)(p-1)q}{8}\ \ (p\text{ odd},q\text{ even}),\\
&\frac{(N-1)(p-1)(q-1)}{8}\ \ (p,q\text{ odd},n\text{ even}),\\
&\frac{N(p-1)(q-1)}{8}\ \ (p,q\text{ odd},n\text{ odd}).\\
\end{cases}
\]
\end{corollary}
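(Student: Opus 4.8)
The plan is to read the degree off directly from the product formula of the Main Theorem, $\sigma_{(p,q,n)}(t)=\prod_{(a,b)}Y_{(n,a,b)}(t)$, using that over $\C$ the degree of a product of nonzero polynomials equals the sum of the degrees. This reduces the claim to two independent bookkeeping tasks: (i) computing $\deg Y_{(n,a,b)}(t)$, which turns out not to depend on the pair $(a,b)$; and (ii) counting the admissible pairs $(a,b)$.

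For step (i) I would extract the degree from the proof of the Main Theorem. When $p$ or $q$ is even, and also when $p,q$ are odd with $n$ even, the integer $N=|pqn+1|$ is odd, so $T_{N+1}$ and $T_{N-1}$ are even polynomials; then $X_n(x)=\bigl(T_{N+1}(x)-T_{N-1}(x)\bigr)/\bigl(2(x^2-1)\bigr)$ is an even polynomial of degree $N-1$, its leading term coming from $T_{N+1}$, and the substitution $x=\sqrt{t}/\bigl(2\sqrt{C_{(p,q,a,b)}}\bigr)$ produces $Y_{(n,a,b)}(t)$ of degree $(N-1)/2$. When $p,q,n$ are all odd, $N$ is even, $X'_n(x)=T_N(x)$ is an even polynomial of degree $N$, and the same substitution gives $\deg Y_{(n,a,b)}(t)=N/2$. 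In each case the leading coefficient is a nonzero power of $2$, so these degrees are exact and not merely upper bounds.

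For step (ii) the admissible pairs satisfy $0<a<p$, $0<b<q$ and $a\equiv b\equiv 1\ (\mathrm{mod}\ 2)$; since the constraints on $a$ and on $b$ are independent, the number of such pairs is the product of the number of odd integers in $(0,p)$ and the number of odd integers in $(0,q)$, and the number of odd integers in $(0,m)$ is $m/2$ if $m$ is even and $(m-1)/2$ if $m$ is odd. Multiplying the constant per-factor degree from (i) by this count in each of the four cases and simplifying gives the four displayed formulas; for example, for $p$ even and $q$ odd one obtains $\tfrac{N-1}{2}\cdot\tfrac{p}{2}\cdot\tfrac{q-1}{2}=\tfrac{(N-1)p(q-1)}{8}$, and the other three cases are handled identically.

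I do not expect any real obstacle: granting the Main Theorem, the argument is entirely elementary. The only points that deserve an explicit sentence are that the leading coefficients of $X_n$ and $X'_n$ do not vanish, so that the degrees in step (i) are exact, and that multiplication of polynomials over $\C$ adds degrees exactly, so no cancellation can occur in the product over $(a,b)$. It is also worth remarking that the formula is consistent with the convention $\sigma_{(p,q,0)}(t)=1$: for $n=0$ one has $N=1$ and $n$ even, so the third case applies and the formula returns degree $0$.
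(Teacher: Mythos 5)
Your proposal is correct and follows exactly the ``direct computation'' the paper intends: per-factor degree $\tfrac{N-1}{2}$ (or $\tfrac{N}{2}$ when $p,q,n$ are all odd) from the Main Theorem, multiplied by the count of admissible pairs $(a,b)$, with degrees adding over the product. Your per-factor degree $\tfrac{N}{2}$ in the all-odd case is the right value (and is what the corollary's fourth formula requires), even though the paper's proof of the Main Theorem misstates it there as $\tfrac{N-1}{2}$.
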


We mention the 3-term relations. 
For each factor of $Y_{(n,a,b)}(t)$ 
of $\sigma_{(p,q,n)}(t)$, there exists the following relation. 

\begin{proposition}
\noindent
\begin{enumerate}
\item
Assume one of $p$ and $q$ is even.  
For any $n$, it holds that 
\[
Y_{(n+1,a,b)}(t)=D(t)Y_{(n,a,b)}(t)-Y_{(n-1,a,b)}(t)
\]
where 
$D(t)=2T_{p q }\left(\frac{\sqrt{t}}{2\sqrt{C_{p, q,a,b}}}\right)$. 
\item
Assume both of $p,q$ are odd. 
For any $n$, it holds that 
\[
Y_{(n+2,a,b)}(t)=D(t)Y_{(n,a,b)}(t)-Y_{(n-2,a,b)}(t)
\]
where 
$D(t)=2T_{2p q }\left(\frac{\sqrt{t}}{2\sqrt{C_{2p, q,a,b}}}\right)$. 
\end{enumerate}
\end{proposition}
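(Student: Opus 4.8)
The plan is to reduce everything to the classical three-term recurrence satisfied by the Tchebychev polynomials of the first kind. Recall the identity
\[
T_{l+m}(x)+T_{l-m}(x)=2T_l(x)T_m(x),
\]
which is the engine behind every step. In the notation of the main theorem we have $s=\frac{\sqrt t}{2\sqrt{C_{(p,q,a,b)}}}$, and $Y_{(n,a,b)}(t)$ is obtained from the universal polynomials $X_n(x)$ or $X'_n(x)$ by the substitution $x=s$ together with a fixed scalar that does not depend on $n$ (namely $\pm\frac{1}{2(s^2-1)^2}$ in the "one of $p,q$ even" cases, and nothing extra in the "both odd" case). Since a three-term relation of the form $Z_{n+1}=D\cdot Z_n-Z_{n-1}$ is preserved under multiplying the whole family $\{Z_n\}$ by a fixed ($n$-independent) nonzero factor, it suffices to prove the recurrences at the level of $X_n$ and $X'_n$, and then substitute $x=s$ to read off $D(t)$.

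For part (1), write $N(n)=|pqn+1|$. First I would handle $n>0$, where $N(n)=pqn+1$, so $N(n+1)-N(n)=N(n)-N(n-1)=pq$. Using $X_n(x)=\frac{T_{N+1}(x)-T_{N-1}(x)}{2(x^2-1)}$, it is enough to show
\[
T_{N(n+1)\pm1}(x)-\bigl[\,2T_{pq}(x)\,\bigr]T_{N(n)\pm1}(x)+T_{N(n-1)\pm1}(x)=0
\]
for both sign choices; but this is exactly the identity $T_{l+m}+T_{l-m}=2T_mT_l$ with $m=pq$ and $l=N(n)\pm1$. Dividing through by $2(x^2-1)$ gives $X_{n+1}=2T_{pq}(x)X_n-X_{n-1}$, and substituting $x=s$ yields $D(t)=2T_{pq}(s)$. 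For $n<0$ one has $N(n)=pq|n|-1$ and $X_n$ carries an extra global sign; since that sign is constant in $n$ it does not affect the recurrence. The only subtle bookkeeping is at the transition $n=0,\pm1$, where the sign convention and the definition $X_0=1$ must be checked to be consistent with the recurrence; by the remark following the main theorem $Y_{(0,a,b)}=1$, and one verifies $Y_{(1,a,b)}=D\cdot Y_{(0,a,b)}-Y_{(-1,a,b)}$ directly from the degree-$\frac{N-1}{2}$ description of the roots. Part (1) also covers $p$ odd, $q$ even by the same computation with the roles symmetric, and (by the main theorem) the case $p,q$ odd with $n$ even, where again $N=|pqn+1|$ and the factor involved is $2T_{pq}(s)$ — here one should note the proposition as stated writes $D(t)=2T_{pq}(\cdots)$ in part (1), so the "$p,q$ odd, $n$ even" family is subsumed there.

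For part (2), both $p,q$ odd, we step by $2$ rather than $1$: replacing $n$ by $n+2$ changes $N$ by $2pq$ when $n>0$ (and similarly in the negative range), so now the relevant instance of the Tchebychev addition formula is $T_{l+2pq}+T_{l-2pq}=2T_{2pq}(x)T_l(x)$. When $n$ is odd this is applied to $X'_n(x)=T_N(x)$ directly with $l=N(n)$, giving $X'_{n+2}=2T_{2pq}(x)X'_n-X'_{n-2}$; when $n$ is even it is applied to $X_n$ with $l=N(n)\pm1$ exactly as in part (1). Substituting $x=s$ with the scaling constant written in the proposition gives $D(t)=2T_{2pq}(s)$. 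The main obstacle, such as it is, is purely organizational: keeping the four (or five) cases of the main theorem aligned with the two cases of the proposition, tracking the global signs and the $|pqn+1|$ versus $pq|n|\mp1$ distinction across $n=0$, and confirming that the $n$-independent scalar pulled out in passing from $X_n$ to $Y_{(n,a,b)}$ really is independent of $n$ (it is, since it depends only on $s$, hence only on $t$ and $C_{(p,q,a,b)}$, not on $N$). Once those conventions are pinned down, each case is a one-line application of the addition theorem for $T_l$.
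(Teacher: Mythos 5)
Your argument is correct and is essentially the paper's own proof: both rest on the product-to-sum identity $2T_m(x)T_l(x)=T_{l+m}(x)+T_{l-m}(x)$, applied with $m=pq$ to $X_n$ for part (1) and with $m=2pq$ to $X_n$ and $X'_n$ for part (2), followed by the substitution $x=s$, with the $n$-independent scaling factor and the sign/absolute-value bookkeeping across $n=0$ handled as you describe. One aside should be deleted, however: the ``$p,q$ odd, $n$ even'' family is \emph{not} subsumed by part (1). A step-one relation with $D=2T_{pq}(s)$ would have to connect $\frac{T_{N+1}(x)-T_{N-1}(x)}{2(x^2-1)}$ at even indices (which is $U_{N-1}(x)$, a Tchebychev polynomial of the second kind) with $T_N(x)$ at the neighboring odd indices, and the identity gives $2T_{pq}(x)U_{pqn}(x)=U_{pq(n+1)}(x)+U_{pq(n-1)}(x)$, not $T_{N(n+1)}(x)+T_{N(n-1)}(x)$; this mismatch of types is exactly why the proposition steps by $2$ when both $p$ and $q$ are odd.
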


\begin{proof}
Here we need to consider $N=|pqn+1|$ is a function of $n\in\Z$ for fixed $p,q$. 
Then we write $N(n)$ for $N$ in this proof. 

The proof for the first case is essentially the same one for the 3-term relations \cite{Kitano15-2}. 
We give the proof only for the second case. 

Recall the following property of Tchebychev polynomials
\[
2T_m(x)T_n(x)=T_{m+n}(x)+T_{m-n}(x)
\]
for any $m,n\in\Z$. 

\noindent
\flushleft{Case 1: $n$ is even}

If $n>0$ one has
\[
\begin{split}
2T_{2p q}(x)X_n(x)
&=2T_{2p q }(x)\frac{T_{N(n)+1}(x)-T_{N(n)-1}(x)}{2(x^2-1)}\\
&=\frac{T_{(pqn+1)+1+2pq}(x)+T_{(pqn+1)+1-2pq}(x)
-(T_{(pqn+1)-1+2pq}(x)+T_{(pqn+1)-1-2pq}(x))}{2(x^2-1)}\\
&=\frac{T_{pq(n+2)+1+1}(x)-T_{pq(n+2)+1-1}(x)
+T_{pq(n-2)+1+1}(x)-T_{pq(n-2)+1-1}(x)}{2(x^2-1)}\\
&=\frac{T_{N(n+2)+1}(x)-T_{N(n+2)-1}(x)
+T_{N(n-2)+1}(x)-T_{N(n-2)-1}(x)}{2(x^2-1)}\\
&=X_{n+2}(x)+X_{n-2}(x).
\end{split}
\]
Therefore it can be seen that 
\[
X_{n+2}(x)=2T_{2p q }(x)X_n(x)-X_{n-2}(x)
\]
and 
\[
Y_{(n+2,a,b)}(t)
=2T_{2p q }\left(\frac{\sqrt{t}}{2\sqrt{C_{(2p,q,a,b)}}}\right)
Y_{(n,a,b)}(t)-Y_{(n-2,a,b)}(t).
\]

If $n<0$, it can be also proved by the above argument. 

\noindent
\flushleft{Case 2: $n$ is odd}

If $n>0$, one has 
\[
\begin{split}
2T_{2p q}(x)X'_n(x)
&=2T_{2p q }(x)T_{N(n)}(x)\\
&=T_{pqn+1+2pq}(x)+T_{pqn+1-2pq}(x)\\
&=T_{pq(n+2)+1}(x)+T_{pq(n-2)+1}(x)\\
&=T_{N(n+2)}(x)+T_{N(n-2)}(x)\\
&=X'_{n+2}(x)+X'_{n-2}(x).
\end{split}
\]
Therefore it can be seen that 
\[
X'_{n+2}(x)=2T_{2p q }(x)X'_n(x)-X'_{n-2}(x)
\]
and 
\[
Y_{(n+2,a,b)}(t)
=2T_{2p q }\left(\frac{\sqrt{t}}{2\sqrt{C_{(2p,q,a,b)}}}\right)
Y_{(n,a,b)}(t)-Y_{(n-2,a,b)}(t).
\]

If $n<0$, it can be also proved. 

This completes the proof of this proposition.
\end{proof}

%

\section{examples}

Finally we give some examples. 

\begin{example}
Put $p=4, q=3$. Now $N=|12n+1|$. 
In this case $(a,b)=(1,1),(3,1)$. 
By applying the main theorem, one has
\[
\begin{split}
\sigma_{(4,3,-1)}(t)
&=34359738368t^{10}-77309411328t^{9}+66840428544t^{8}\\
&-28655484928t^{7}+6677331968 t^{6}-882900992 t^{5}+66371584t^{4}\\
&-2723840 t^{3}+55680t^{2}-480t+1.\\
\sigma_{(4,3,0)}(t)&=1.\\
\sigma_{(4,3,1)}(t)&=4398046511104t^{12}-12094627905536t^{11}+13434657701888t^{10}\\
&-7859790151680t^{9}+2670664351744t^{8}-552909930496t^{7}\\
&+71319945216t^{6}-5727322112 t^{5}+278757376t^{4}\\
&-7741440 t^{3}+110208t^{2}-672t+1.
\end{split}
\]
\end{example}

\begin{example}
Put $p=3, q=5$. Now $N=|15n+1|$. 
In this case $(a,b)=(1,1),(1,3)$. 
For any odd number $n$, 
one has 
\[
\begin{split}
\sigma_{(3,5,n)}(t)
&=Y_{(n,1,1)}(t) Y_{(n,1,3)}(t)\\
&=T_{N}
\left(\frac{\sqrt{t}}{2\sqrt{C_{(3,5,1,1)}}}\right) Y_{N}\left(\frac{\sqrt{t}}{2\sqrt{C_{(3,5,1,3)}}}\right). 
\end{split}
\]
By applying the main theorem, we obtain
\[
\begin{split}
\sigma_{(3,5,-1)}(t)
&=
18014398509481984 t^{14}-47287796087390208 t^{13}+51721026970583040 t^{12}\\
&-30847898228883456 t^{11}+11085001353330688 t^{10}-2520389888507904
t^9\\
&+372923420377088 t^8-36436086620160 t^7+2352597696512 t^6\\
&-98837200896 t^5+2605023232 t^4-40341504 t^3+329280 t^2-1176 t+11.\\
\sigma_{(3,5,0)}(t)
&=1.\\
\sigma_{(3,5,1)}(t)
&=
4611686018427387904 t^{16}-13835058055282163712
t^{15}\\
&+17726168133330272256 t^{14}-12754194144713244672 t^{13}\\
&+5718164151876976640 t^{12}-1682516673287946240 t^{11}\\
&+334779300425236480 t^{10}-45872724622442496
t^9\\
&+4367893693202432 t^8+-288911712583680 t^7\\
&+13126896451584 t^6-399582953472 t^5\\
&+7798652928 t^4-90832896 t^3+563200 t^2-1536 t+1.
\end{split}
\]

\end{example}

%




\begin{thebibliography}{999}
\bibliographystyle{amsplain}

\bibitem{Johnson}
D. Johnson, 
\textit{A geometric form of Casson's invariant and its connection to Reidemeister torsion}, unpublished lecture notes.

\bibitem{Kitano94-1}
T. Kitano, 
\textit{Reidemeister torsion of Seifert fibered spaces for $\SL(2;\C)$ representations}, 
Tokyo J. Math. \textbf{ 17} (1994), 59--75.

\bibitem{Kitano94-2}
T. Kitano, 
\textit{Reidemeister torsion of the figure-eight knot exterior for $\SL(2;\C)$-representations}, 
Osaka J. Math. \textbf{ 31}, (1994), 523--532.

\bibitem{Kitano15-1}
T. Kitano, 
\textit{Reidemeister torsion of a 3-manifold obtained by an integral Dehn-
surgery along the figure-eight knot}, arXiv:1506.00712, to appear in Kodai Math. J.

\bibitem{Kitano15-2}
T. Kitano, 
{A polynomial defined by the $\SL(2;\C)$-Reidemeister torsion for a homology 3-sphere obtained by a Dehn surgery along a $(2p,q)$-torus knot}, 
arXiv:1506.01774, to appear in Tohoku Math. J.

\bibitem{Moser71-1}
L. Moser, 
\textit{Elementary surgery along a torus knot}, 
Pacific J. Math. \textbf{ 38} (1971), 737--745. 

\bibitem{Porti15-1}
J. Porti, 
\textit{Reidemeister torsion, hyperbolic three-manifolds, and character varieties},
arXiv:1511.00400

\end{thebibliography}
\end{document}